\definecolor{mylinkcolor}{rgb}{0.8,0,0}
\definecolor{myurlcolor}{rgb}{0,0,0.8}
\definecolor{mycitecolor}{rgb}{0,0,0.8}
\newtheorem{defn}{Definition}[section]
\newtheorem{definition}[defn]{Definition}
\newtheorem{lemma}[defn]{Lemma}
\newtheorem{thm}[defn]{Theorem}
\newtheorem{cor}[defn]{Corollary}
\newtheorem{prop}[defn]{Proposition}
\theoremstyle{definition}
\newtheorem{remark}[defn]{Remark}
\newtheorem{question}[defn]{Question}
\newtheorem{example}[defn]{Example}
\newcommand{\QQ}{\mathbb Q}
\newcommand{\ZZ}{\mathbb Z}
\newcommand{\Gal}{\operatorname{Gal}}
\newcommand{\GL}{\operatorname{GL}}
\newcommand{\E}{\mathcal{E}}
\newcommand{\tor}{\mathrm{tors}}
\newcommand{\Br}{{\rm Br}}
\begin{document}



\bibliographystyle{plain}
\title[Torsion over $\QQ(D_4^\infty)$ -- Errata]{An Errata for: Torsion subgroups of rational elliptic curves over the compositum of all $D_4$ extensions of the rational numbers}

\author{Harris B. Daniels}
\address{Department of Mathematics and Statistics, Amherst College, Amherst, MA 01002, USA}
\email{hdaniels@amherst.edu}
\urladdr{http://hdaniels.people.amherst.edu}

\keywords{Elliptic Curve, Torsion Points, Galois Theory}

\subjclass[2010]{Primary: 11G05, Secondary: 11R21, 12F10, 14H52.}

\begin{abstract}
In \cite{D4}, the author claims that the fields $\QQ(D_4^\infty)$ defined in the paper and the compositum of all $D_4$ extensions of $\QQ$ coincide. The proof of this claim depends on a misreading of a celebrated result by Shafarevich. The purpose is to salvage the main results of \cite{D4}. That is, the classification of torsion structures of $E$ defined over $\QQ$ when base changed to the compositum of all $D_4$ extensions of $\QQ$ main results of \cite{D4}. All the main results in \cite{D4} are still correct except that we are no longer able to prove that these two fields are equal.
\end{abstract}

\maketitle

\section{Introduction}

The issue that this paper addresses arises at the end of \cite[Section 1]{D4} where the author asserts that the compositum of all $D_4$ extensions of $\QQ$, called $F$ in that paper and $\QQ_{D_4}$ here, and the compositum of all generalized $D_4$ extensions of $\QQ$, see \cite[Definition 1.10]{D4}, denoted $\QQ(D_4^\infty)$, are the same. The author says that this follows from a simple argument as well as a celebrated theorem of Shafarevich. The argument is made that since $D_4$ is nilpotent the kernels of all the embedding problems must also be nilpotent and hence are solvable. The problem is that there is another condition that must be met in order to apply said theorem. For the reader's convenience we include a statement of the theorem by Shafarevich below.

\begin{thm}\cite{Ish, serreTopics, Sha}\label{thm:Sha}
Let $L/K$ be an extension of number fields with Galois group $S$, let $U$ be a nilpotent group with $S$-action, and let $G$ be a semi-direct product $U\rtimes S$. Then the embedding problem for $L/K$ and 
$$
1\rightarrow U \rightarrow G\rightarrow S\rightarrow 1
$$
has a solution.
\end{thm}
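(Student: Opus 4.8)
The plan is to follow the classical dévissage of Shafarevich (cf.\ \cite{Sha,serreTopics,Ish}), reading ``solution'' as a \emph{proper} (surjective) solution, since that is what realizes $G$ as $\Gal(M/K)$ for some $M \supseteq L$. First I would strip the nilpotent kernel down to elementary abelian layers. Because $U$ is nilpotent it is the direct product of its Sylow subgroups, each of which is characteristic in $U$ and hence stable under the $S$-action; solving the problem one prime at a time and amalgamating the resulting extensions (which can be taken linearly disjoint over $L$) reduces us to the case that $U$ is a $p$-group. For a $p$-group I would filter $U$ by the characteristic, and therefore $S$-stable, terms of a central series refined through the Frattini subgroup $\Phi(U) = [U,U]U^{p}$, obtaining $U = U_0 \supseteq U_1 = \Phi(U) \supseteq \cdots \supseteq U_n = 1$ with each graded piece $V_i = U_i/U_{i+1}$ an elementary abelian $p$-group, i.e.\ a finite $\FF_p[S]$-module. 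This presents $G$ as the top of a tower $G \twoheadrightarrow G/U_{n-1} \twoheadrightarrow \cdots \twoheadrightarrow G/U_1 \twoheadrightarrow S$, and it suffices to solve the embedding problem one layer at a time.

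The layers split into two qualitatively different types, and this is exactly where the hypothesis $G = U \rtimes S$ does its work. The top layer $1 \to V_0 \to (V_0 \rtimes S) \to S \to 1$ is split, so a weak solution exists immediately from the section $S \to G$; to make it proper I would realize the $\FF_p[S]$-module $V_0$ as the Galois module of an extension of $L$ that is linearly disjoint from $L$ over the fixed field of the kernel of $S \to \Aut(V_0)$, forcing the image to surject onto $V_0 \rtimes S$. Every lower layer, by contrast, has kernel $V_i$ contained in the Frattini subgroup $\Phi(G/U_{i+1})$, so it is a \emph{Frattini} extension: any weak solution is then automatically proper, since a subgroup $H \le G/U_{i+1}$ with $H V_i = G/U_{i+1}$ must equal $G/U_{i+1}$ once $V_i \subseteq \Phi(G/U_{i+1})$. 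Thus for the lower layers I only need to produce \emph{some} solution, whereas surjectivity comes for free.

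The genuine obstacle is the existence of a weak solution at the Frattini layers, where the extension need not split (the quaternionic layer inside a $Q_8$-realization being the standard cautionary example). The obstruction to lifting the already-constructed surjection $G_K \to G/U_i$ to a homomorphism $G_K \to G/U_{i+1}$ is a class in $H^2(G_K, V_i)$, and in general it does not vanish. Here I would invoke Shafarevich's auxiliary-prime construction: embed $V_i$ into a suitable induced (co-induced) $\FF_p[S]$-module, for which the relevant $H^2$ vanishes, and then use class field theory together with the Chebotarev density theorem to manufacture cyclic extensions ramified at carefully chosen auxiliary primes whose Frobenius classes realize the cocycle needed to kill the obstruction. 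The crux --- and the step I expect to be hardest --- is arranging these primes so that the local conditions (via Grunwald--Wang) are simultaneously satisfiable, the resulting cocycle is globally realizable, and the new ramification is disjoint from the part of the tower already built, so that the layer solutions compose. Assembling the layers from the bottom upward yields a field $M \supseteq L$ with $\Gal(M/K) \cong G$, completing the proof.
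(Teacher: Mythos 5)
The paper does not prove this statement at all: it is quoted verbatim from the literature (Shafarevich, Ishkhanov, Serre's \emph{Topics in Galois Theory}) precisely because its proof is a substantial piece of arithmetic, not something the errata re-derives. So there is no internal proof to compare against; the only question is whether your sketch would stand on its own.

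It would not, as written. Your reductions are sound and standard: the Sylow decomposition of the nilpotent kernel with linear disjointness from coprimality, the Frattini filtration $U \supseteq \Phi(U) \supseteq \cdots$ with each term characteristic in $U$ and hence $S$-stable, the fact that $\Phi(U)\subseteq\Phi(G)$ for $U\lhd G$ so that every layer below the top is a Frattini cover and weak solutions are automatically proper, and the proper solvability of the split top layer with elementary abelian kernel over a Hilbertian number field. But all of that is the easy periphery. The entire content of Shafarevich's theorem is the step you dispatch in one sentence as ``invoke Shafarevich's auxiliary-prime construction'': producing weak solutions to the non-split Frattini layers. Naming the obstruction class in $H^2(G_K,V_i)$ and saying one kills it by embedding $V_i$ into an induced module and choosing auxiliary primes via Chebotarev is a description of the theorem's proof, not a proof; carried out honestly it occupies some thirty pages (see e.g.\ Neukirch--Schmidt--Wingberg, Ch.~IX), and the original argument famously contained an error at the prime $2$ that took years to repair --- which is exactly why the paper cites \cite{Ish}. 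There is also a structural gap in your induction: the obstruction at layer $i$ depends on the weak solution chosen at layer $i-1$, so a naive top-down, one-layer-at-a-time argument can get stuck; Shafarevich's construction chooses the auxiliary primes compatibly for the whole tower at once, and your sketch does not address how the layer solutions are made to cohere. As a roadmap of the known proof your outline is accurate; as a proof it reduces the theorem to its hardest step and then cites it.
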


As Theorem \ref{thm:Sha} indicates, in order to use this theorem it is not sufficient for the embedding problem to have a nilpotent kernel, it must {\it also} be split. While $D_4$ itself fits into a short exact sequence that is split, it is not the case that all the relevant embedding problems can be made to split. 

\begin{example}
Consider $K = \QQ(\sqrt{d})$. It is a classical result that $K$ can only be embedded into a degree 4 cyclic extension of $\QQ$ if $d$ can be written as the sum of two rational squares. In order words, the embedding problem given by 
$$
1 \rightarrow \ZZ/2\ZZ \rightarrow \ZZ/4\ZZ \rightarrow \ZZ/2\ZZ \rightarrow 1
$$
is not always solvable despite having nilpotent kernel.

In contrast, the problem of embedding a single quadratic extension of $\QQ$ into a $D_4$ extension of $\QQ$ is always possible since the relevant embedding problem
$$
1 \rightarrow \ZZ/4\ZZ \rightarrow D_4\simeq \ZZ/4\ZZ \rtimes \ZZ/2\ZZ \rightarrow \ZZ/2\ZZ \rightarrow 1
$$
is split. Here we should be careful to point out that a Galois extension $L/\QQ$ with Galois group $D_4$ in fact contains 3 different quadratic extensions of $\QQ$. Using the Galois correspondence, we can see that the embedding problem above corresponds to embedding $\QQ(\sqrt{d})$ into $L$ in the only way that makes the extension $L/\QQ(\sqrt{d})$ have Galois group $\ZZ/4\ZZ$. 
\end{example}

For the rest of the article we will let $\QQ_{D_4}$ be the compositum of all $D_4$ extensions of $\QQ$, $\QQ(D_4^\infty)$ be the compositum of all generalized $D_4$ extensions of $\QQ$, and we aim to prove the following theorem:

\begin{thm}\label{thm:goal}
Let $E/\QQ$ be an elliptic curve. Then $E(\QQ_{D_4})_\tor = E(\QQ(D_4^\infty))_\tor$. 
\end{thm}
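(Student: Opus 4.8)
The plan is to prove the two inclusions separately, with essentially all the content in one direction. The inclusion $\QQ_{D_4}\subseteq\QQ(D_4^\infty)$ is immediate, since every $D_4$ extension of $\QQ$ is in particular a generalized $D_4$ extension; this gives $E(\QQ_{D_4})_\tor\subseteq E(\QQ(D_4^\infty))_\tor$ for free. For the reverse inclusion I would first recast it as a statement about a single Galois action. The extension $\QQ(D_4^\infty)/\QQ_{D_4}$ is Galois; write $\Gamma=\Gal(\QQ(D_4^\infty)/\QQ_{D_4})$. Since $E$ is defined over $\QQ\subseteq\QQ_{D_4}$, Galois descent gives $E(\QQ(D_4^\infty))^{\Gamma}=E(\QQ_{D_4})$, so the theorem is equivalent to the assertion that $\Gamma$ acts trivially on $E(\QQ(D_4^\infty))_\tor$, i.e.\ that every torsion point of $E$ defined over $\QQ(D_4^\infty)$ is already defined over $\QQ_{D_4}$.

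Next I would pin down the new part of the field. The subgroups of $D_4$ are, up to isomorphism, $\{1\}$, $\ZZ/2\ZZ$, $\ZZ/4\ZZ$, $V_4$ and $D_4$, so $\QQ(D_4^\infty)$ is the compositum of all quadratic, biquadratic, cyclic quartic, and $D_4$ extensions of $\QQ$. By the Example, every quadratic field embeds in a $D_4$ extension, hence lies in $\QQ_{D_4}$, and therefore so does every biquadratic field. Consequently $\QQ(D_4^\infty)=\QQ_{D_4}\cdot\QQ(C_4^\infty)$, where $\QQ(C_4^\infty)$ is the compositum of all cyclic quartic ($\ZZ/4\ZZ$) extensions of $\QQ$. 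In particular $\QQ(C_4^\infty)\subseteq\QQ^{ab}$, so $\Gamma$ is an abelian profinite group of exponent dividing $4$, and the only fields in $\QQ(D_4^\infty)$ that might fail to lie in $\QQ_{D_4}$ are cyclic quartic fields -- precisely the extensions whose $D_4$-realizability the erroneous appeal to Shafarevich was meant to provide.

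With this reduction in hand, I would invoke the classification of $E(\QQ(D_4^\infty))_\tor$ from \cite{D4}, which the present erratum leaves intact, to cut the problem down to the finite list of possible torsion configurations. Fix $P\in E(\QQ(D_4^\infty))_\tor$ of order $n$. A failure of $P\in E(\QQ_{D_4})$ would produce a nontrivial quotient of $\Gamma$ cutting out $\QQ_{D_4}(P)/\QQ_{D_4}$, which can only arise from a cyclic quartic subfield $M$ of the division field $\QQ(P)$ with $M\not\subseteq\QQ_{D_4}$. The heart of the argument is thus to show that, for a torsion point of a curve defined over $\QQ$, every such $M$ is in fact contained in $\QQ_{D_4}$. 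Here I would exploit that $M$ never appears alone: because $E$ is defined over $\QQ$, the Weil pairing forces $\det\bar\rho_n$ to be the mod-$n$ cyclotomic character, so the cyclic quartic extension generated by $P$ is accompanied by the quadratic and non-abelian data recorded by the action of $\Gal(\QQ(E[n])/\QQ)$ on the remainder of $E[n]$; this ambient structure should exhibit $M$ as a subfield of a compositum of genuine $D_4$ extensions.

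I expect this final verification to be the main obstacle, and for good reason: realizing $M$ inside $\QQ_{D_4}$ is exactly the embedding-problem statement that Shafarevich's theorem would have granted had the relevant sequence split, which in general it does not. In lieu of that shortcut I would argue case by case over the finite list from \cite{D4}: for each admissible torsion type, exhibit an explicit $D_4$ extension (or subdirect product of $D_4$ extensions) of $\QQ$ whose associated field contains the field of definition of the torsion, using that every quadratic field -- and hence all the quadratic twisting data -- is already available inside $\QQ_{D_4}$. The payoff is that, although we cannot prove $\QQ(C_4^\infty)\subseteq\QQ_{D_4}$ in general, the particular cyclic quartic fields that can support torsion of rational elliptic curves are always $D_4$-realizable, which yields $E(\QQ(D_4^\infty))_\tor\subseteq E(\QQ_{D_4})$ and completes the proof.
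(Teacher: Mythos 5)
There is a genuine gap, and it occurs at the very first structural step of your argument. You identify $\QQ(D_4^\infty)$ with the compositum of all extensions of $\QQ$ whose Galois group is a \emph{subgroup} of $D_4$ (quadratic, biquadratic, cyclic quartic, and $D_4$ fields). That is not the definition: $\QQ(D_4^\infty)$ is the compositum of all \emph{generalized} $D_4$ extensions in the sense of \cite[Definition 1.10]{D4}, a class of $2$-groups that includes, among others, $Q_8$, ${\rm SmallGroup}(16,13)$ and ${\rm SmallGroup}(32,49)$ --- none of which embeds in $D_4$, and none of which is known to be realized inside a compositum of $D_4$ and cyclic quartic fields. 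Consequently $\Gal(\QQ(D_4^\infty)/\QQ_{D_4})$ is not known to be abelian of exponent $4$, and your reduction ``the only fields that might fail to lie in $\QQ_{D_4}$ are cyclic quartic'' does not hold. Worse, your claimed identity $\QQ(D_4^\infty)=\QQ_{D_4}\cdot\QQ(C_4^\infty)$ would, if true, immediately prove $\QQ(D_4^\infty)=\QQ_{D_4}$, because $\QQ_{D_4}$ is in fact $\ZZ/4\ZZ$-complete: Proposition \ref{prop:Simple_G_in_F}(3) shows every cyclic quartic field $K$ lies in $\QQ_{D_4}$ by compositing it with a $D_4$ extension $L$ of its quadratic subfield so that $\Gal(LK/\QQ)\simeq{\rm SmallGroup}(16,3)$ is covered by two $D_4$ quotients with trivially intersecting kernels (Lemma \ref{lem:contained}). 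Since the paper explicitly leaves $\QQ(D_4^\infty)=\QQ_{D_4}$ open, your reduction cannot be correct; and your closing sentence has the difficulty exactly inverted --- the cyclic quartic case is the part that is completely solved, while the obstruction lives in the nonabelian layers.

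What actually has to be carried out, and what your proposal omits, is the $p=2$ analysis. The field of definition of $E(\QQ(D_4^\infty))[2^\infty]$ sits inside $\QQ(E[2^{n+1}])$, whose Galois group is a subgroup of $\GL_2(\ZZ/2^{n+1}\ZZ)$, and the groups that intervene are $Q_8$, ${\rm SmallGroup}(16,13)$, ${\rm SmallGroup}(32,49)$ and three groups of order $64$. The paper disposes of these with the Fr\"ohlich embedding criterion (Theorem \ref{thm:embedding}): a central embedding problem with kernel $\ZZ/2\ZZ$ over $\QQ(\sqrt{a_1},\dots,\sqrt{a_r})$ is solvable if and only if a product of quaternion symbols vanishes in $\Br(\QQ)$, and whenever the field in question exists those symbols do vanish, which lets one manufacture auxiliary $D_4$ and $\ZZ/4\ZZ$ extensions whose compositum with $K$ has Galois group covered by $D_4$-type quotients; the order-$64$ groups are reduced to the smaller ones by splitting the commutator subgroup as $N_1\times N_2$ and applying the pullback decomposition, and the final matching against \cite[Table 2]{D4} is a finite computation over the Rouse--Zureick-Brown classification. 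Your last paragraph gestures at the right idea (exhibit a subdirect product of $D_4$ extensions containing the field of definition), but without the Brauer-group criterion and this group-theoretic bookkeeping there is no argument that these particular nonabelian fields land in $\QQ_{D_4}$; that verification is the entire content of the proof rather than a routine check.
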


Once Theorem \ref{thm:goal} is proven all of the results in \cite{D4} are completely justified except the assertion that $\QQ_{D_4} = \QQ(D_4^\infty)$. Since this assertion was not a central part of the paper we don't attempt to prove it here, but we do discuss what is left to show in Section \ref{sec:open}. 

For the ease of the reader, in what follows any groups defined by a small group label will have a link to the corresponding LMFDB page \cite{lmfdb} if it exists and if there is a more commonplace name or description, we use that in place of the small group label.

\subsection{Acknowledgements} We would like to thank Maarten Derickx for pointing this mistake out and for helpful conversations throughout the preparation of this document. We would also like to thank Jackson Morrow and the anonymous referee for helpful comments on an initial draft of this paper. 

\section{Galois Embedding Problems and the Brauer Group of a Field} 

We start this section by introducing Galois embedding problems and exploring a particular problem that illustrates the need for this paper. For the remainder of this section unless otherwise stated we will let $L/K$ be a Galois extension with Galois group $Q$ and 
\begin{equation*}\label{eq:ses}
1 \rightarrow N \rightarrow G \rightarrow Q \rightarrow 1
\end{equation*}
be a short exact sequence of groups. We want to study when there exists a Galois extension $M/K$ with $L\subseteq M$ and $\Gal(M/K)\simeq G$. If such an $M$ exists, we say that $M$ is a solution to the embedding problem given by $(L/K,G,N)$.

One of the main tools that is used to study Galois embedding problems is the Brauer group of a field. 

\begin{defn}
Let $K$ be a field. The {\it Brauer group of $K$}, denoted $\Br(K)$, is the abelian group whose elements are Morita equivalence classes of central simple algebras over $K$ under the binary operation of tensor products. 
\end{defn}

For $a,b\in K$ we let $(a,b)_K$ (or just $(a,b)$ when it is clear from context) represent the class of central simple algebras equivalent to the quaternion algebra $K(i,j)/\langle i^2 = a,j^2 = b, ij = -ji\rangle$ inside $\Br(K)$. These algebras have order 2 in $\Br(K)$ and are deeply connected to the embedding problems that we are interested in.

The main result that we will use to prove Theorem \ref{thm:goal} is known as the embedding criterion and it gives criterion for the existence of a solution to the embedding problem of the form \\$$(F(\sqrt{a_1},\dots,\sqrt{a_r})/F, G, \ZZ/2\ZZ),$$
where $F$ is a field, the $a_i$'s are independent modulo $(F^\times)^2$, and $G$ is a non-split central extension of $(\ZZ/2\ZZ)^r$.

\begin{thm}\label{thm:embedding} {\rm (Embedding Criterion, \cite[Section 7]{Frohlic})}  Let $F$ be a field and $K = F(\sqrt{a_1},\dots,\sqrt{a_r})$ were the $a_i$'s are in $F^\times$ and independent modulo $(F^\times)^2$. Let $Q = \Gal(K/F)$, and consider a non-split central extension $G$ of $\ZZ/2\ZZ$ by $Q$. Let $\sigma_1,\dots,\sigma_r$ generate $Q$, where $\sigma_i(\sqrt{a_j}) = (-1)^{\delta(i,j)}\sqrt{a_j}$ and let $\tilde{\sigma_1},\dots,\tilde{\sigma_r}$ be any set of preimages of $\sigma_1,\dots,\sigma_r$ in $G$. Define $c_{ij}$ for $i\leq j$ by $c_{ij} = 1$ if and only if $[\tilde{\sigma}_i,\tilde{\sigma}_j]\neq 1$ for $i<j$ and $c_{ii} = 1$ if and only if $(\tilde{\sigma}_i)^2 \neq 1$. There exists a Galois Extension $L/F$, $L \supseteq K$, such that $\Gal(L/F)\simeq G$ and the surjection $G \to Q$ is the natural surjection of Galois groups, if and only if 
$$\prod_{i\leq j} (a_i,a_j)^{c_{ij}} = 1 \in \Br(F).$$
\end{thm}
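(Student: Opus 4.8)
The plan is to translate the embedding problem into a lifting problem in Galois cohomology and then identify the resulting obstruction class with the displayed product of quaternion symbols. Assume $\mathrm{char}\,F\neq 2$ (otherwise the $\sqrt{a_i}$ do not generate a multiquadratic extension), and fix the identification $\ZZ/2\ZZ\cong\mu_2$. Writing $G_F=\Gal(\overline{F}/F)$, the extension $K/F$ corresponds to a surjection $\phi:G_F\twoheadrightarrow Q$, and a solution $L/F$ to the embedding problem $(K/F,G,\ZZ/2\ZZ)$ is exactly a continuous surjection $\tilde\phi:G_F\twoheadrightarrow G$ lifting $\phi$ along $G\to Q$. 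Since $G$ is the central extension $1\to\ZZ/2\ZZ\to G\to Q\to 1$, it is classified by a class $\epsilon\in\Hc^2(Q,\ZZ/2\ZZ)$, and standard obstruction theory for central extensions says that a (not necessarily surjective) lift $\tilde\phi$ exists if and only if the pullback $\phi^*(\epsilon)\in\Hc^2(G_F,\ZZ/2\ZZ)$ vanishes. Under the chosen identification this group is $\Hc^2(G_F,\mu_2)=\Br(F)[2]$, so the whole problem reduces to computing $\phi^*(\epsilon)$ inside $\Br(F)$.

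First I would make $\epsilon$ explicit. For the elementary abelian group $Q\cong(\ZZ/2\ZZ)^r$ the mod-$2$ cohomology ring is the polynomial algebra $\FF_2[x_1,\dots,x_r]$ with $x_i\in\Hc^1(Q,\FF_2)$ dual to $\sigma_i$, so $\Hc^2(Q,\FF_2)$ has basis $\{x_ix_j:i<j\}\cup\{x_i^2:i\}$ and we may write $\epsilon=\sum_{i\le j}c_{ij}\,x_ix_j$. The key bookkeeping step is the dictionary between central extensions of an elementary abelian $2$-group and symmetric data on its generators, under which the coefficient $c_{ij}$ is detected precisely by the commutator $[\tilde\sigma_i,\tilde\sigma_j]$ for $i<j$ and by the square $\tilde\sigma_i^2$ for $i=j$ — which is exactly how the $c_{ij}$ are defined in the statement. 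Next I would compute the pullback: by Kummer theory $\phi^*(x_i)\in\Hc^1(G_F,\mu_2)=F^\times/(F^\times)^2$ is the class of $a_i$, since $\phi^*(x_i)$ is the character cutting out $F(\sqrt{a_i})$. Naturality of the cup product then gives
$$
\phi^*(\epsilon)=\sum_{i\le j}c_{ij}\,\phi^*(x_i)\cup\phi^*(x_j),
$$
and the classical identification of the cup product $\Hc^1(G_F,\mu_2)\times\Hc^1(G_F,\mu_2)\to\Hc^2(G_F,\mu_2)=\Br(F)[2]$ with the quaternion symbol sends $\phi^*(x_i)\cup\phi^*(x_j)$ to $(a_i,a_j)_F$ (the diagonal terms giving $(a_i,a_i)=(a_i,-1)$). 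Writing $\Br(F)[2]$ multiplicatively, this is exactly $\prod_{i\le j}(a_i,a_j)^{c_{ij}}$, so $\phi^*(\epsilon)$ vanishes if and only if this product is trivial in $\Br(F)$.

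It remains to upgrade the abstract lift to a genuine field solution, i.e. to argue that whenever $\phi^*(\epsilon)=1$ the lift $\tilde\phi$ may be taken surjective; this is where the non-split hypothesis enters. Given any lift $\tilde\phi$, its image $H=\im\tilde\phi$ surjects onto $Q$, so $H\cap(\ZZ/2\ZZ)$ is either all of the central $\ZZ/2\ZZ$, forcing $H=G$, or trivial, in which case $H\cong Q$ furnishes a section of $G\to Q$ and contradicts non-splitness. Hence every lift is automatically surjective and cuts out a field $L$ with $\Gal(L/F)\cong G$ and the correct restriction map. I expect the main obstacle to be the cohomological identification at the heart of the middle paragraph: making rigorous the passage $\epsilon\mapsto(c_{ij})$ through the ring structure of $\Hc^*(Q,\FF_2)$, handling the diagonal $x_i^2$ terms, and invoking the compatibility of the cup product with the quaternion symbol — that is, the input one would cite from the cohomological description of $\Br(F)$. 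The Galois-theoretic surjectivity argument and the Kummer identification are comparatively routine.
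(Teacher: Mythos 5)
The paper does not prove this statement; it is quoted verbatim from Fr\"ohlich (the citation to \cite[Section 7]{Frohlic} is the ``proof''), so there is no in-paper argument to compare yours against. On its own merits, your proposal is the standard obstruction-theoretic proof of this classical criterion and is correct in outline. The three pillars are all sound: (i) a continuous lift of $\phi:G_F\twoheadrightarrow Q$ along the central extension exists iff $\phi^*(\epsilon)=0$ in $\Hc^2(G_F,\ZZ/2\ZZ)$; (ii) the dictionary $\epsilon=\sum_{i\le j}c_{ij}x_ix_j$ with $c_{ii}$ read off from $\tilde\sigma_i^2$ and $c_{ij}$ from $[\tilde\sigma_i,\tilde\sigma_j]$ is exactly the identification of $\Hc^2((\ZZ/2\ZZ)^r,\FF_2)$ with quadratic forms on $Q$ (the squaring map $q\mapsto\tilde q^{\,2}$ is a quadratic form whose polar form is the commutator pairing, and both are independent of the choice of preimages because the kernel is central of exponent $2$); and (iii) Kummer theory plus the cup-product description of the quaternion symbol turns $\phi^*(x_i)\cup\phi^*(x_j)$ into $(a_i,a_j)$, with the diagonal terms $(a_i,a_i)=(a_i,-1)$ as you say. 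Your surjectivity argument from non-splitness is also the right one and is complete as stated. The only substantive work you have deferred is making (ii) precise, which you correctly flag; that step is where the specific definition of the $c_{ij}$ in the statement earns its keep, and it is standard material rather than a gap.
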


Using Theorem \ref{thm:embedding}, we get the following result concerning polyquadratic extensions of $\QQ$.

\begin{cor} \cite{Embedding, Smith}  \label{cor:embedding} Let $a,b,c,d\in\ZZ$ be nonsquare and independent mod $\ZZ^2$. \\
\begin{enumerate}
\item The field $\QQ(\sqrt{a})$ can be embedded into a $\ZZ/4\ZZ$ extension of $\QQ$ if and only if $$(a,a)=(a,-1) = 1\in \Br(\QQ).$$
\item The field $\QQ(\sqrt{a},\sqrt{b})$ can be embedded into a $D_4$ extension of $\QQ$ if and only if $$(a,b) = 1\in \Br(\QQ).$$
\item The field $\QQ(\sqrt{a},\sqrt{b})$ can be embedded into a $Q_8$ extension of $\QQ$ if and only if $$(a,b)(ab,-1) = 1 \in \Br(\QQ).$$
\item The field $\QQ(\sqrt{a},\sqrt{b},\sqrt{c})$ can be embedded into a $\href{http://www.lmfdb.org/GaloisGroup/16T11}{\rm SmallGroup(16,13)}$ extension of $\QQ$ if and only if $$ (a,ab)(c,-1) = 1 \in \Br(\QQ).$$
\item The field $\QQ(\sqrt{a},\sqrt{b},\sqrt{c},\sqrt{d})$ can be embedding into a $\href{http://www.lmfdb.org/GaloisGroup/32T9}{\rm SmallGroup(32,49)}$ extension of $\QQ$ if and only if $$ (a,b)(c,d)=1 \in \Br(\QQ).$$
\end{enumerate}
\end{cor}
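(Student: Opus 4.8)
The plan is to derive all five parts uniformly from the Embedding Criterion (Theorem~\ref{thm:embedding}), taking $F = \QQ$, letting $K$ be the stated polyquadratic field, and setting $Q = \Gal(K/\QQ) \cong (\ZZ/2\ZZ)^r$. For each part I would first exhibit the target group $G$ as a non-split central extension
$$ 1 \rightarrow \ZZ/2\ZZ \rightarrow G \rightarrow (\ZZ/2\ZZ)^r \rightarrow 1, $$
by pinning down the central $\ZZ/2\ZZ$ (for example $\langle r^2\rangle$ in $D_4$ and $\{\pm 1\}$ in $Q_8$) and checking that the quotient is elementary abelian of the correct rank. Non-splitness is then automatic: for $r \geq 2$ a split extension would force $G$ abelian, contradicting non-commutativity of the chosen $G$, while for part~(1) it follows from $\ZZ/4\ZZ \not\cong (\ZZ/2\ZZ)^2$. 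This places each part exactly in the hypotheses of Theorem~\ref{thm:embedding}.

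The core of each case is bookkeeping inside $G$. I would fix the standard generators $\sigma_i$ of $Q$ determined by $\sigma_i(\sqrt{a_j}) = (-1)^{\delta(i,j)}\sqrt{a_j}$, choose convenient preimages $\tilde\sigma_i \in G$, and read the structure constants off the multiplication table, putting $c_{ii} = 1$ exactly when $\tilde\sigma_i^2 \neq 1$ and $c_{ij} = 1$ exactly when $[\tilde\sigma_i,\tilde\sigma_j] \neq 1$. In part~(1) the single preimage generates $\ZZ/4\ZZ$, so $\tilde\sigma_1^2$ is the nontrivial central element and $c_{11} = 1$. In part~(2), taking both preimages to be reflections in $D_4$ gives $c_{11} = c_{22} = 0$ and $c_{12} = 1$ (their commutator is $r^2$), where this choice corresponds to $\QQ(\sqrt{ab})$ being the subfield fixed by the cyclic index-two subgroup. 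In part~(3) the preimages $\tilde\sigma_1 = i$, $\tilde\sigma_2 = j$ in $Q_8$ give $c_{11} = c_{22} = c_{12} = 1$. Substituting into $\prod_{i\le j}(a_i,a_j)^{c_{ij}} = 1$ produces the raw condition in each case.

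The final step is to reconcile the raw product with the stated criterion using the standard identities for the quaternion symbol: bimultiplicativity, $(x,-x) = 1$ (hence $(x,x) = (x,-1)$), and $(x,-1)(y,-1) = (xy,-1)$. These turn $(a,a)$ into $(a,-1)$ in part~(1), convert $(a,a)(a,b)(b,b)$ into $(a,b)(ab,-1)$ in part~(3), and likewise match parts~(4) and~(5). I would also record that, for a fixed choice of generators $\sigma_i$, Theorem~\ref{thm:embedding} guarantees the resulting Brauer class is independent of the chosen preimages $\tilde\sigma_i$, so any convenient lift suffices; the choice of the $\sigma_i$ themselves amounts to labelling the quadratic subfields, which we fix to match the form of each stated condition.

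I expect the main obstacle to be the two larger groups, $\mathrm{SmallGroup}(16,13)$ and $\mathrm{SmallGroup}(32,49)$. For these one must settle on a correct presentation as a central extension of $(\ZZ/2\ZZ)^3$, respectively $(\ZZ/2\ZZ)^4$, by $\ZZ/2\ZZ$, verify non-splitness, and choose the generators so that the computed constants reproduce the stated criteria after simplification: in part~(4) one needs $c_{11} = c_{12} = c_{33} = 1$ with the remaining constants zero, giving $(a,a)(a,b)(c,c) = (a,ab)(c,-1)$, and in part~(5) one needs $c_{12} = c_{34} = 1$ with the rest zero, giving $(a,b)(c,d)$. The delicate point is not the symbol arithmetic but aligning the labelling of the square roots with the commutator pattern of $G$, particularly because the criterion in part~(4) is asymmetric in $a,b,c$.
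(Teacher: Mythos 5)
Your proposal is correct and follows exactly the route the paper intends: the paper gives no proof of its own, deferring to \cite{Embedding, Smith}, but explicitly frames the corollary as an application of Theorem~\ref{thm:embedding}, which is precisely what you carry out. Your structure constants check out in each case (e.g.\ $c_{11}=c_{12}=c_{33}=1$ for ${\rm SmallGroup}(16,13)\cong D_4\circ C_4$ with preimages $r,s,t$, and $c_{12}=c_{34}=1$ for ${\rm SmallGroup}(32,49)\cong D_4\circ D_4$ with reflection preimages), and your attention to the labelling of the quadratic subfields is the right thing to flag.
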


We will use the information in Corollary \ref{cor:embedding} in order to prove that if $K/\QQ$ has Galois group $G$ which is isomorphic to one of the 5 groups in Corollary \ref{cor:embedding}, then $K$ must be in $\QQ_{D_4}$. 

\begin{remark}
While it may not seem like it at first glance, everything in Corollary \ref{cor:embedding} is completely explicit. For example, given a field $K = \QQ(\sqrt{a},\sqrt{b})$ such that $(a,b)= 1\in\Br(\QQ)$, every $D_4$ extension of $\QQ$ that contains $K$ can be given concretely. Since $(a,b)=1 \in \Br(\QQ)$ there exist $\alpha,\beta\in\QQ$ such that $b=\alpha^2-a\beta^2$. Then for any $r\in\QQ^\times$, the extension $\QQ(\sqrt{r(\alpha+\beta\sqrt{a})},\sqrt{\beta})/\QQ$ is a $D_4$ extension that contains $K$ and every such extension arises in this way.
\end{remark}

\section{Proof of the main theorem}

Before outlining the proof of Theorem \ref{thm:goal}, we give a definition and lemma.

\begin{definition}
Given a group $G$, we say that a field $F/\QQ$ is {\bfseries $G$-complete} if for every Galois extension $K/\QQ$ such that $\Gal(K/\QQ) \simeq G$ it follows that $K\subseteq F$. The field $F$ is said to be {\bfseries $G$-incomplete} if there exists at least one Galois extension $K/\QQ$ with $\Gal(K/\QQ)\simeq G$ and $K\not\subseteq F$.
\end{definition}

\begin{lemma}\label{lem:contained}
Let $F$ be an extension of $\QQ$ and let $K/\QQ$ be a finite Galois extension with Galois group $G$. If there exist normal subgroups $N_1,\dots, N_r \lhd G$ such that

\begin{enumerate}
\item for all $1\leq i \leq r$ the field $F$  is $G/N_i$-complete, and 
\item $\displaystyle \bigcap_{i=1}^r N_i = \{e\}$,
\end{enumerate}
then $K\subseteq F$.
\end{lemma}

\begin{proof}
For each $1\leq i \leq r$ let $K_i$ be the subfield of $K$ with Galois group $N_i$ guaranteed to exists by the fundamental theorem of Galois theory. Each of these fields is in $F$ by condition (1) and condition (2) ensures that $K = K_1K_2\dots K_r$. Since each $K_i$ is in $F$ we have that $K = K_1K_2\dots K_r$ is contained in $F$.
\end{proof}

\begin{remark}
Clearly, if $K/\QQ$ is a finite extension that is not Galois with Galois closure $K^{\rm Gal}$ such that $K^{\rm Gal}/\QQ$ satisfies conditions of Lemma \ref{lem:contained} with $F=\QQ_{D_4}$, then $K\subseteq K^{\rm Gal} \subseteq \QQ_{D_4}$.
\end{remark}

With Lemma \ref{lem:contained} in hand, we start to see how one might prove Theorem \ref{thm:goal}. 
The idea is to first show that $\QQ_{D_4}$ is $G$-complete for a few carefully chosen groups $G$ and then use these together with Lemma \ref{lem:contained} to prove that
$$\QQ(E(\QQ(D_4^\infty))[p^\infty])\subseteq \QQ_{D_4}.$$
That is, we will show that for every elliptic curve $E/\QQ$ and prime $p$, the field of definition of the $p$-primary component of $E(\QQ(D_4^\infty))$ is contained in $\QQ_{D_4}$. 
This combined with the fact that $\QQ_{D_4}\subseteq\QQ(D_4^\infty)$ would give us that $E(\QQ_{D_4})[p^\infty] = E(\QQ(D_4^\infty))[p^\infty]$. From there, Theorem \ref{thm:goal} would follow from the classification of finite abelian groups.

We start the process by proving that $\QQ_{D_4}$ is $G$-complete for some small groups.

\begin{prop}\label{prop:Simple_G_in_F}
If $G$ is a group isomorphic to $\ZZ/2\ZZ$, $\ZZ/4\ZZ$, or $D_4$ then $\QQ_{D_4}$ is $G$-complete.
\end{prop}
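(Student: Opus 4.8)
The plan is to dispose of the three groups in increasing order of difficulty. The case $G\simeq D_4$ is immediate: by definition $\QQ_{D_4}$ is the compositum of \emph{all} $D_4$ extensions of $\QQ$, so any Galois $K/\QQ$ with $\Gal(K/\QQ)\simeq D_4$ is literally one of the fields in this compositum, whence $K\subseteq\QQ_{D_4}$. For $G\simeq\ZZ/2\ZZ$ I would invoke the split embedding problem already highlighted in the introduction. Given a quadratic field $\QQ(\sqrt a)$, apply Theorem \ref{thm:Sha} with $K=\QQ$, $L=\QQ(\sqrt a)$, $S=\Gal(\QQ(\sqrt a)/\QQ)\simeq\ZZ/2\ZZ$, nilpotent kernel $U=\ZZ/4\ZZ$, and $G=\ZZ/4\ZZ\rtimes\ZZ/2\ZZ\simeq D_4$; since this sequence is split, the embedding problem has a solution $M/\QQ$ with $\Gal(M/\QQ)\simeq D_4$ and $\QQ(\sqrt a)\subseteq M$, so $\QQ(\sqrt a)\subseteq M\subseteq\QQ_{D_4}$.

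The real work is the case $G\simeq\ZZ/4\ZZ$, and here the obstacle is structural: a $D_4$ extension has abelianization $(\ZZ/2\ZZ)^2$ and therefore contains \emph{no} cyclic quartic subfield, so no single $D_4$ field can contain a given cyclic quartic $K$. The idea is instead to realize $K$ inside a compositum of two $D_4$ fields. Write $K=\QQ(\sqrt a)(\sqrt\theta)$ for the quadratic subfield $\QQ(\sqrt a)\subset K$ and some $\theta=\alpha+\beta\sqrt a\in\QQ(\sqrt a)^\times$; the standard criterion distinguishing cyclic from biquadratic quartics forces the norm $\Norm(\theta)=\alpha^2-a\beta^2$ to lie in $a\,(\QQ^\times)^2$, and in particular $\Norm(\theta)$ is a nonsquare.

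The key step is then to factor $\theta$, modulo squares in $\QQ(\sqrt a)^\times$, as a product of two elements each of whose norm is a nonsquare of $\QQ^\times$ independent from $a$. Concretely, I would choose a nonsquare $b\in\QQ^\times$ with $b\not\sim a$ and $(a,b)=1\in\Br(\QQ)$; since $(a,b)=1$ is equivalent to $b$ being a norm from $\QQ(\sqrt a)$, there is $\phi_1\in\QQ(\sqrt a)^\times$ with $\Norm(\phi_1)=b$. Setting $\phi_2=\theta\phi_1$ gives $\Norm(\phi_2)=\Norm(\theta)\,b\sim ab$, again a nonsquare independent from $a$, and $\phi_1\phi_2=\theta\phi_1^2\equiv\theta\pmod{(\QQ(\sqrt a)^\times)^2}$. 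Because each $\Norm(\phi_i)$ is a nonsquare that is automatically a norm from $\QQ(\sqrt a)$, the explicit description following Corollary \ref{cor:embedding} produces $D_4$ extensions $L_i=\QQ(\sqrt{\phi_i},\sqrt{\beta_i})\subseteq\QQ_{D_4}$, where $\phi_i=\alpha_i+\beta_i\sqrt a$, each containing both $\sqrt{\phi_i}$ and $\sqrt a$. Finally $\sqrt{\phi_1}\sqrt{\phi_2}=\sqrt{\theta}\,\phi_1$ shows $\sqrt\theta\in L_1L_2$, hence $K=\QQ(\sqrt a,\sqrt\theta)\subseteq L_1L_2\subseteq\QQ_{D_4}$.

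I expect the main obstacle to be exactly this factorization and the bookkeeping around it: one must check that $b$ can be chosen a nonsquare independent from $a$ and representable as a norm (which holds because the norm form $\alpha^2-a\beta^2$ represents infinitely many square classes), that the resulting $\phi_1,\phi_2$ have nonsquare norms independent from $a$ so that the construction following Corollary \ref{cor:embedding} genuinely yields degree-$8$ $D_4$ fields rather than degenerating when a norm becomes a square or becomes equivalent to $a$, and that $\sqrt a$ indeed lies in each $L_i$. Once these independence conditions are verified, the conclusion that $\sqrt\theta$, and therefore all of $K$, lies in the compositum of two $D_4$ fields is forced.
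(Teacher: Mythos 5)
Your treatment of the $D_4$ and $\ZZ/2\ZZ$ cases coincides with the paper's. For the cyclic quartic case your argument is correct but follows a genuinely different route. The paper works group-theoretically: it picks a single auxiliary $D_4$ extension $L\supseteq\QQ(\sqrt a)$ with $L/\QQ(\sqrt a)$ non-cyclic, identifies $\Gal(LK/\QQ)$ as ${\rm SmallGroup}(16,3)$, observes that this group has two normal subgroups with trivial intersection and both quotients isomorphic to $D_4$, and concludes via Lemma \ref{lem:contained}. You instead construct the two $D_4$ fields explicitly by Kummer theory: writing $K=\QQ(\sqrt a,\sqrt\theta)$ with $\Norm(\theta)\in a(\QQ^\times)^2$, you factor $\theta\equiv\phi_1\phi_2$ modulo squares of $\QQ(\sqrt a)^\times$ so that $\Norm(\phi_1)\sim b$ and $\Norm(\phi_2)\sim ab$ are nonsquares independent of $a$, whence each $\QQ(\sqrt{\phi_i})$ has $D_4$ Galois closure $L_i$ containing $\sqrt a$, and $\sqrt{\phi_1}\sqrt{\phi_2}=\pm\phi_1\sqrt\theta$ puts $K$ inside $L_1L_2$. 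The two arguments land on the same structural fact --- a cyclic quartic lies in a compositum of two $D_4$ fields --- but yours buys explicitness (concrete generators for the $L_i$, no need to search subgroups of $D_4\times C_4$ or invoke Lemma \ref{lem:contained}), at the cost of the norm-form bookkeeping you flag: the existence of a nonsquare $b\not\sim a$ with $(a,b)=1$ (clear, e.g., from Chebotarev applied to split primes), and the checks that $\beta_i\neq 0$ and that neither $\Norm(\phi_i)$ degenerates to a square or to the class of $a$, all of which go through. One cosmetic point: the field you should take for $L_i$ is the Galois closure $\QQ(\sqrt{\phi_i},\sqrt{\Norm(\phi_i)})$ of $\QQ(\sqrt{\phi_i})/\QQ$; the expression $\QQ(\sqrt{\phi_i},\sqrt{\beta_i})$ you copied reflects what appears to be a typo in the remark following Corollary \ref{cor:embedding} (where $\sqrt{\beta}$ should presumably read $\sqrt{b}$), but this does not affect your conclusion since only the containment $\sqrt{\phi_i}\in L_i\subseteq\QQ_{D_4}$ is used.
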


\begin{proof} Let $K/\QQ$ be a number field with $\Gal(K/\QQ)\simeq G$.
\begin{enumerate}
\item Suppose $G$ is isomorphic to $\ZZ/2\ZZ$. In this case $K=\QQ(\sqrt{a})$ for some squarefree $a$ in $\ZZ$ and we saw from Theorem \ref{thm:Sha} that any quadratic extension of $\QQ$ can be embedded into a $D_4$ extension. Thus $K\subseteq \QQ_{D_4}$ and $\QQ_{D_4}$ is $\ZZ/2\ZZ$-complete.

\item Suppose $G$ is isomorphic to $D_4$. Clearly $\QQ_{D_4}$ is $D_4$-complete.

\item Suppose $G$ is isomorphic to $\ZZ/4\ZZ$. This means that $K$ has a unique subfield of the form $\QQ(\sqrt{a})$. 
This unique quadratic field can be embedded into a $D_4$ extension $L$ such that $L/\QQ(\sqrt{a})$ is not a cyclic degree 4 extension and $K$ is not a subfield of $L$. 
We now consider the Galois extension given by the compositum of $L$ and $K$. 
Since the intersection of $L$ and $K$ is exactly $\QQ(\sqrt{a})$, we have that $\Gal(LK/\QQ)$ is isomorphic to a subgroup of index 2 inside $D_4\times C_4$, call it $G$. 
We also know that if $\pi_1$ and $\pi_2$ are the projection maps out of $D_4\times C_4$, then $\pi_1(G)= D_4$ and $\pi_2(G)= C_4$.
Since $L\cap K= \QQ(\sqrt{a})$, $G$  contains a normal subgroup $N$ such that $[D_4:\pi_1(N)] = [C_4:\pi_2(N)] = 2$ (so the fixed field that is in both $K$ and $L$ is a quadratic) and $\pi_1(N)\neq \langle f \rangle$ (since $\QQ(\sqrt{a})/\QQ$ is {\it not} cyclic). 
If we let $N_1$ and $N_2$ be the 2, noncyclic subgroups of $D_4$ of index 2, we can rephrase these conditions to say that $G$ has the property that either $G \cap \pi_1^{-1}(N_1) = G \cap \pi_2^{-1}(\langle c^2 \rangle)$ or $G \cap \pi_1^{-1}(N_2) = G \cap \pi_2^{-1}(\langle c^2 \rangle)$ and the common group is an index 2 subgroup of $G$. 
Searching the index 2 subgroups of $D_4\times C_4$, we see that there are two groups up to conjugation with this property and they are both isomorphic to $\href{http://www.lmfdb.org/GaloisGroup/16T10}{\rm SmallGroup(16,3)}$.  
With this $\Gal(LK/\QQ)$ is isomorphic to $\href{http://www.lmfdb.org/GaloisGroup/16T10}{\rm SmallGroup(16,3)}$ and letting $\Gal(LK/\QQ) = H$ we see that $H$ has two distinct normal subgroups $N_1$ and $N_2$ such that $N_1\cap N_2 =\{e\}$ and $H/N_1\simeq H/N_2\simeq D_4$. 
Thus by Lemma \ref{lem:contained} and part (2) of this proposition, $K\subseteq KL \subseteq \QQ_{D_4}$ and $\QQ_{D_4}$ is $\ZZ/4\ZZ$-complete

%

\end{enumerate}
\end{proof}

The following lemmas give a list of small groups for which $\QQ_{D_4}$ is not necessarily $G$-complete, but with some added assumptions on $K/\QQ$, we can show that $K\subseteq \QQ_{D_4}$. 

\begin{lemma}\label{lem:Q8_condly}
	Let $G$ be isomorphic to {\rm \href{http://www.lmfdb.org/GaloisGroup/8T5}{SmallGroup(8,4)}} and let $F = \mathbb{Q}(\sqrt{a},\sqrt{b})$ such that 
	$$(a,b)=(ab,-1)=1\in{\rm Br}(\QQ).$$ If $K/\QQ$ is a solution to the embedding problem $(F/\QQ,G,\ZZ/2\ZZ)$, then $K\subseteq \QQ_{D_4}$.
\end{lemma}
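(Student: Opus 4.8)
The group $G\cong Q_8=\mathrm{SmallGroup}(8,4)$ has the feature that every nontrivial normal subgroup contains the center $Z(G)\cong\ZZ/2\ZZ$, so no family of proper normal subgroups can intersect in $\{e\}$ and Lemma \ref{lem:contained} cannot be applied to $K$ directly. Since the kernel of the embedding problem is $\ZZ/2\ZZ$ and $Q_8/Z(Q_8)\cong(\ZZ/2\ZZ)^2=\Gal(F/\QQ)$, the unique normal subgroup of order $2$ must be $\Gal(K/F)=Z(G)$. The plan is therefore to enlarge $K$ to a slightly larger Galois extension whose Galois group \emph{does} admit two normal subgroups with trivial intersection whose quotients lie among the groups for which $\QQ_{D_4}$ is already known to be complete, namely $\ZZ/4\ZZ$ and $D_4$ by Proposition \ref{prop:Simple_G_in_F}.

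First I would build the auxiliary field. Using the standard Brauer identity $(ab,ab)=(ab,-1)$ in $\Br(\QQ)$, the hypothesis $(ab,-1)=1$ together with Corollary \ref{cor:embedding}(1) shows that $\QQ(\sqrt{ab})$ embeds into a cyclic quartic extension $C/\QQ$ with $\Gal(C/\QQ)\cong\ZZ/4\ZZ$, and $C\subseteq\QQ_{D_4}$ by Proposition \ref{prop:Simple_G_in_F}. Next I would compute $\Gal(KC/\QQ)$. A $Q_8$-extension has no cyclic quartic subfield, because the only normal subgroup of $Q_8$ of index $4$ is $Z(Q_8)$ and the corresponding quotient is $(\ZZ/2\ZZ)^2$, not $\ZZ/4\ZZ$; hence $C\not\subseteq K$, so $K\cap C=\QQ(\sqrt{ab})$ and $\Gal(KC/\QQ)$ is the fiber product $Q_8\times_{\ZZ/2\ZZ}\ZZ/4\ZZ$, a group of order $16$.

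It then remains to exhibit the two normal subgroups. I would take $N_1=\Gal(KC/C)$, which is the index-$2$ subgroup of $Q_8$ fixing $\QQ(\sqrt{ab})$ (isomorphic to $\ZZ/4\ZZ$) embedded in the fiber product; its quotient is $\Gal(C/\QQ)\cong\ZZ/4\ZZ$. For $N_2$ I would take the central subgroup of order $2$ generated by the element whose first coordinate is the nontrivial element of $Z(Q_8)$ and whose second coordinate is the element of order $2$ in $\ZZ/4\ZZ$; a direct check in the fiber product shows $\Gal(KC/\QQ)/N_2\cong D_4$ (the quotient is generated by two involutions whose product has order $4$). Writing elements as pairs, $N_1$ consists of pairs with trivial second coordinate while $N_2$ has nontrivial second coordinate, so $N_1\cap N_2=\{e\}$. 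Both quotients $\ZZ/4\ZZ$ and $D_4$ are groups for which $\QQ_{D_4}$ is complete, so Lemma \ref{lem:contained} yields $KC\subseteq\QQ_{D_4}$, whence $K\subseteq KC\subseteq\QQ_{D_4}$. The main obstacle is this middle step: verifying that the order-$16$ fiber product genuinely carries a $D_4$ quotient \emph{complementary} to the $\ZZ/4\ZZ$ quotient coming from $C$, since this is exactly what fails for $K$ on its own and is the whole reason for adjoining $C$. I would finally remark that, because the mere existence of the $Q_8$-extension $K$ containing $F$ already forces $(a,b)(ab,-1)=1$ by Corollary \ref{cor:embedding}(3), the two stated hypotheses $(a,b)=1$ and $(ab,-1)=1$ are in fact equivalent here; one could equally adjoin a $D_4$-extension of $\QQ$ containing $F$ (available from $(a,b)=1$ via Corollary \ref{cor:embedding}(2)) and run the symmetric argument.
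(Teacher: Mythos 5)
Your proof is correct, and it follows the same template as the paper's --- adjoin an auxiliary extension supplied by one of the Brauer-group hypotheses, identify the order-$16$ Galois group of the compositum, and apply Lemma \ref{lem:contained} with quotients among the groups handled by Proposition \ref{prop:Simple_G_in_F} --- but it instantiates that template differently. The paper uses $(a,b)=1$ to produce a $D_4$-extension $L\supseteq\QQ(\sqrt{a},\sqrt{b})$ and passes to $LK$, citing a machine computation that $\Gal(LK/\QQ)\simeq{\rm SmallGroup}(16,4)$ has three normal subgroups with trivial intersection and quotients $D_4$, $\ZZ/4\ZZ$, $\ZZ/4\ZZ$. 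You instead use $(ab,-1)=1$ (via the identity $(ab,ab)=(ab,-1)$) to produce a cyclic quartic $C\supseteq\QQ(\sqrt{ab})$ and pass to $KC$; your fiber product $Q_8\times_{\ZZ/2\ZZ}\ZZ/4\ZZ$ is in fact the same abstract group ${\rm SmallGroup}(16,4)$, and your two explicit normal subgroups, with quotients $\ZZ/4\ZZ$ and $D_4$, realize by hand the ``only two are needed'' observation the paper makes in the remark after its proof. What your route buys is a fully human-verifiable argument (the $D_4$ quotient of the fiber product is checked directly), and your closing remark that Corollary \ref{cor:embedding}(3) makes the hypotheses $(a,b)=1$ and $(ab,-1)=1$ equivalent once $K$ is assumed to exist is a genuine clarification of why a single Brauer condition suffices. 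One small wording slip: $N_2$ does contain the identity, whose second coordinate is trivial; what your computation actually shows is that the unique nonidentity element of $N_2$ has nontrivial second coordinate while every element of $N_1$ has trivial second coordinate, which is what yields $N_1\cap N_2=\{e\}$.
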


\begin{proof}
	From Corollary \ref{cor:embedding} and the added assumption that $(a,b) =1 \in{\rm Br}(\QQ)$, we know that there is a Galois extension $L/\QQ$ such that $\QQ(\sqrt{a},\sqrt{b})\subseteq L$ and $\Gal(L/\QQ)\simeq D_4$. 
	Considering the compositum of $K$ and $L$ we see that $\Gal(LK/\QQ)$ is isomorphic to the uniqe group of order 16 with a quotient isomorphic to $D4$ and a quotient isomorphic to $Q_8$, namely $H \simeq \href{http://www.lmfdb.org/GaloisGroup/16T8}{\rm Smallgroup(16,4)}$. 
	The group $H$ has 3 normal subgroup $N_1$, $N_2$, and $N_3$ such that $N_1\cap N_2\cap N_3=\{e\}$ and $H/N_1\simeq D_4$ and $H/N_2\simeq H/N_3 \simeq \ZZ/4\ZZ$. Therefore, by Lemma \ref{lem:contained} and Proposition \ref{prop:Simple_G_in_F} , we have $K\subseteq LK \subseteq \QQ_{D_4}$.
\end{proof}

\begin{remark}
Here we point out that the previous proof, we didn't actually need all 3 of the normal subgroups. 
In fact, $N_1\cap N_2 = N_1 \cap N_3 = \{ e \}$ and so one in fact only needs two fields tow generate $L$.
We included all 3 in the proof because this is the way the search was conducted, by computing \emph{all} the relevant normal subgroups and computing their intersection.
The code used to verify all of these computations can be found at \cite{MagmaCodeForErrata}.
\end{remark}

\begin{lemma}\label{lem:16-3_condly}
	Let $G$ be isomorphic to {\rm \href{http://www.lmfdb.org/GaloisGroup/8T11}{SmallGroup(16,13)}} and let $F = \mathbb{Q}(\sqrt{a},\sqrt{b},\sqrt{c})$ such that $(a,ab)=(c,-1)=1\in{\rm Br}(\QQ)$. If $K/\QQ$ is a solution to the embedding problem $(F/\QQ,G,\ZZ/2\ZZ)$, then $K\subseteq \QQ_{D_4}$.
\end{lemma}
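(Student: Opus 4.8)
The plan is to mimic the proof of Lemma \ref{lem:Q8_condly}: use the two Brauer-group hypotheses to manufacture auxiliary $D_4$ and cyclic extensions of $\QQ$, adjoin them to $K$, and then apply Lemma \ref{lem:contained} to the compositum. The reason an enlargement is forced on us is structural. Writing $G \simeq D_4 \circ C_4$ (the central product, SmallGroup(16,13)), one checks from the embedding criterion that $G=\langle x,y,z\rangle$ with $\langle x,y\rangle\simeq D_4$, with $z$ central of order $4$, and with $z^2=x^2=w$ the central involution; then $[G,G]=\langle w\rangle$ and $G/\langle w\rangle\simeq(\ZZ/2\ZZ)^3$. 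Since $\langle w\rangle$ is the \emph{unique} minimal normal subgroup of $G$, every proper quotient of $G$ is an elementary abelian $2$-group. In particular $G$ has no quotient isomorphic to $\ZZ/4\ZZ$ or $D_4$, so the intersection of the kernels of all maps from $G$ onto groups for which $\QQ_{D_4}$ is complete equals $\langle w\rangle\neq\{e\}$. Thus Lemma \ref{lem:contained} says nothing about $K$ on its own, and we must pass to a larger field.

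First I would extract the auxiliary fields. The hypothesis $(a,ab)=1$ is exactly the criterion in part (2) of Corollary \ref{cor:embedding} for $\QQ(\sqrt a,\sqrt{ab})=\QQ(\sqrt a,\sqrt b)$ to embed into a $D_4$ extension $M_1/\QQ$ (with $\QQ(\sqrt b)$ the quadratic subfield lying below the cyclic quartic subextension). The hypothesis $(c,-1)=1$ is, using $(c,c)=(c,-1)$, the criterion in part (1) for $\QQ(\sqrt c)$ to embed into a cyclic quartic extension $M_2/\QQ$. Using the explicit description following Corollary \ref{cor:embedding}, I may and do choose $M_1$ and $M_2$ so that, apart from the forced overlaps $M_1\cap K\supseteq\QQ(\sqrt a,\sqrt b)$ and $M_2\cap K\supseteq\QQ(\sqrt c)$, they are independent of $K$ and of one another. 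Note that it is precisely the \emph{separate} vanishing of $(a,ab)$ and of $(c,-1)$, rather than merely their product $(a,ab)(c,-1)=1$ (the bare embedding condition of part (4)), that lets us split off both auxiliary extensions.

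Next I would form $H=\Gal(KM_1M_2/\QQ)$, realized as the fibre product of $G$, $\Gal(M_1/\QQ)\simeq D_4$ and $\Gal(M_2/\QQ)\simeq\ZZ/4\ZZ$ amalgamated over the shared quotients $\Gal(\QQ(\sqrt a,\sqrt b)/\QQ)$ and $\Gal(\QQ(\sqrt c)/\QQ)$. A finite group computation identifies $H$ and, more importantly, produces the list of all normal subgroups $N\lhd H$ with $H/N$ isomorphic to $\ZZ/2\ZZ$, $\ZZ/4\ZZ$, or $D_4$ (for all of which $\QQ_{D_4}$ is complete by Proposition \ref{prop:Simple_G_in_F}). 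The kernels coming from $M_1$ and $M_2$ give a $D_4$ quotient and a $\ZZ/4\ZZ$ quotient, but these alone only recover $M_1M_2$; the remaining good quotients, obtained by combining the honest cyclic structure supplied by $M_2$ with the $D_4$-part of $G$, and symmetrically the $D_4$ structure supplied by $M_1$ with the central $C_4$-part of $G$, are what cut the central involution $w$ back out. Once $\bigcap N=\{e\}$ is confirmed, Lemma \ref{lem:contained} gives $KM_1M_2\subseteq\QQ_{D_4}$, and hence $K\subseteq\QQ_{D_4}$.

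The hard part will be exactly this last verification: showing that $KM_1M_2$ is generated over $\QQ$ by its $D_4$ and $\ZZ/4\ZZ$ subextensions, equivalently that the relevant normal subgroups of $H$ meet trivially. This is a finite computation in $H$, but it is the crux, and it is here that both hypotheses do genuine work, each being responsible for one of the two auxiliary extensions that resolve the central product. As in the remark after Lemma \ref{lem:Q8_condly}, in practice one computes all such normal subgroups and checks their intersection directly.
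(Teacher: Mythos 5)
Your argument is correct in outline and shares the paper's underlying strategy---use the separate vanishing of the Brauer classes to adjoin auxiliary extensions and then decompose the compositum via Lemma \ref{lem:contained}---but the paper's actual proof is leaner than yours. It adjoins only the cyclic quartic extension $L \supseteq \QQ(\sqrt{c})$ supplied by $(c,-1)=1$ and then computes directly that $\Gal(LK/\QQ) \simeq D_4 \times \ZZ/4\ZZ$; since this group visibly has a $D_4$ quotient and a $\ZZ/4\ZZ$ quotient whose kernels intersect trivially, the conclusion is immediate from Proposition \ref{prop:Simple_G_in_F} and Lemma \ref{lem:contained}. In other words, the auxiliary $D_4$ extension $M_1$ you build from $(a,ab)=1$ is superfluous: adjoining the independent $C_4$ already ``untwists'' the central product $C_4 \circ D_4$ into a direct product, so the $D_4$ constituent of $LK$ comes for free from $K$ itself rather than from a second embedding problem. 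Your version still works (the normal subgroups of $\Gal(KM_1M_2/\QQ)$ you describe do intersect trivially, since $KM_2$ alone already decomposes), but it pushes the decisive computation into a group of order $64$ rather than $32$, and that computation---which you correctly flag as the crux---is left unexecuted in your write-up, just as the paper delegates its (smaller) check to the accompanying Magma code. Your structural observation that $\langle w\rangle$ is the unique minimal normal subgroup of $G$, so that $G$ itself has no $D_4$ or $\ZZ/4\ZZ$ quotient and an enlargement is genuinely forced, is a worthwhile point that the paper leaves implicit.
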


\begin{proof}
From Corollary \ref{cor:embedding} and the assumption that $(c,-1) = 1\in\Br(\QQ)$ the field $\QQ(\sqrt{c})$ can be embedded into a degree 4 cyclic extension $L/\QQ$.
 Again considering the field $LK$ we compute that $\Gal(LK/\QQ) \simeq D_4\times \ZZ/4\ZZ \simeq \href{http://www.lmfdb.org/GaloisGroup/32T5}{\rm SmallGroup(32,25)}$ and thus we have $K\subseteq LK\subseteq \QQ_{D_4}$.
\end{proof}

\begin{lemma}\label{lem:32-49_condly}
	Let $G$ be isomorphic to {\rm \href{http://www.lmfdb.org/GaloisGroup/8T22}{SmallGroup(32,49)}} and let $F = \mathbb{Q}(\sqrt{a},\sqrt{b},\sqrt{c},\sqrt{d})$ such that $(a,b)=(c,d)=1\in{\rm Br}(\QQ)$. If $K/\QQ$ is a solution to the embedding problem $(F/\QQ,G,\ZZ/2\ZZ)$, then $K\subseteq \QQ_{D_4}$.
\end{lemma}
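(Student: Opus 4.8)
The plan is to imitate the strategy used in the previous two lemmas: starting from the conditional hypotheses on $F$, I would build an auxiliary $D_4$-extension $L/\QQ$, pass to the compositum $LK$, identify $\Gal(LK/\QQ)$ as a specific group of order $64$, and then apply Lemma \ref{lem:contained} together with Proposition \ref{prop:Simple_G_in_F} and the earlier conditional lemmas to conclude $K \subseteq LK \subseteq \QQ_{D_4}$. Concretely, the hypotheses are $(a,b) = 1$ and $(c,d) = 1$ in $\Br(\QQ)$. By part (2) of Corollary \ref{cor:embedding}, the condition $(a,b) = 1$ means $\QQ(\sqrt a, \sqrt b)$ embeds into a $D_4$-extension $L/\QQ$, and likewise $(c,d) = 1$ means $\QQ(\sqrt c, \sqrt d)$ embeds into a $D_4$-extension. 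I would use one (or both) of these $D_4$-extensions as the auxiliary field $L$.

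The key computational step is to determine $\Gal(LK/\QQ)$. Since $K$ is a solution to $(F/\QQ, G, \ZZ/2\ZZ)$ with $G \simeq \href{http://www.lmfdb.org/GaloisGroup/8T22}{\rm SmallGroup(32,49)}$, we have $|{\Gal(K/\QQ)}| = 32$, and adjoining the $D_4$-extension $L$ (of degree $8$, sharing the quadratic subfields $\QQ(\sqrt a,\sqrt b)$ with $K$) should produce a group of order $64$ sitting as a fiber product over the shared polyquadratic subfield. I expect that, exactly as in Lemma \ref{lem:Q8_condly}, the group $H = \Gal(LK/\QQ)$ admits normal subgroups $N_1,\dots,N_r$ with $\bigcap_i N_i = \{e\}$ and with each quotient $H/N_i$ isomorphic to a group for which $\QQ_{D_4}$ has already been shown to be complete — namely $D_4$, $\ZZ/4\ZZ$, or one of the groups handled conditionally in Lemmas \ref{lem:Q8_condly} and \ref{lem:16-3_condly}. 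Here one must check that whenever such a quotient is one of the conditional groups, the corresponding Brauer-class hypotheses are met; these should follow from the given relations $(a,b)=(c,d)=1$ together with the bilinearity and symmetry of the quaternion symbol. Lemma \ref{lem:contained} then delivers $LK \subseteq \QQ_{D_4}$, and since $K \subseteq LK$ we are done.

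The main obstacle is the same as in the preceding lemmas: correctly identifying the isomorphism type of $H = \Gal(LK/\QQ)$ and exhibiting an explicit family of normal subgroups with trivial intersection whose quotients are all groups already known to lie in $\QQ_{D_4}$. This is a finite group-theoretic search — in practice carried out by computer, as the remark after Lemma \ref{lem:Q8_condly} notes, by enumerating all relevant normal subgroups of $H$ and verifying their intersection is trivial. The subtlety is that, unlike the unconditional case of Proposition \ref{prop:Simple_G_in_F}, some of the quotients $H/N_i$ may themselves only be completable \emph{conditionally}, so I must track which Brauer relations each such quotient requires and confirm they are implied by $(a,b) = (c,d) = 1$. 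I would verify these computations with the \texttt{Magma} code referenced at \cite{MagmaCodeForErrata}.
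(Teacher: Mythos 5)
Your proposal follows essentially the same route as the paper: use $(a,b)=1$ and Corollary \ref{cor:embedding}(2) to produce a $D_4$-extension $L\supseteq\QQ(\sqrt a,\sqrt b)$, identify $H=\Gal(LK/\QQ)$ as a group of order $64$ (the paper computes $H\simeq{\rm SmallGroup}(64,231)$), and exhibit normal subgroups with trivial intersection whose quotients are already known to lie in $\QQ_{D_4}$ (the paper finds $N_1,N_2,N_3$ with $H/N_1\simeq H/N_2\simeq D_4$ and $H/N_3\simeq{\rm SmallGroup}(16,13)$), finishing with Lemma \ref{lem:contained}. Your explicit caveat that conditionally-complete quotients require their own Brauer conditions to be verified is well taken — indeed the paper's quotient $H/N_3\simeq{\rm SmallGroup}(16,13)$ falls under Lemma \ref{lem:16-3_condly} rather than Proposition \ref{prop:Simple_G_in_F} as cited, so that check is genuinely needed.
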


\begin{proof}
From Corollary \ref{cor:embedding} and the added the assumption that $(a,b) = 1\in \Br(\QQ)$ we have that $\QQ(\sqrt{a},\sqrt{b})$ can be embedded into a $D_4$ extension, we pick one of the possible solutions to this embedding problem and call it $L$. Again we consider the extension $LK/\QQ$ and compute that $\Gal(LK/\QQ) = H\simeq \href{http://www.lmfdb.org/GaloisGroup/32T215}{\rm SmallGroup(64,231)}$. The group $H$ has 3 normal subgroups $N_1, N_2,$ and $N_3,$ such that $N_1\cap N_2\cap N_3 = \{e\}$, $H/N_1\simeq H/N_2\simeq D_4$, and $H/N_3\simeq \href{http://www.lmfdb.org/GaloisGroup/16T11}{\rm SmallGroup(16,13)}$. Thus by Lemma \ref{lem:contained} and Proposition \ref{prop:Simple_G_in_F}, we have $K\subseteq LK \subseteq \QQ_{D_4}$.  

\end{proof}

Before we can prove Theorem \ref{thm:goal} we will need to prove that $\QQ_{D_4}$ is $G$-complete for some groups of size 64. In order to do this we will first need a group theory lemma and a proposition about Galois embedding problems with decomposable kernel. 

\begin{lemma}\label{lem:grps_wedge}\cite[Lemma 4.1]{32}
Let $G$ be a group and let $N_1$, and $N_2$ be normal subgroups of $G$ such that $N_1\cap N_2 = \{e\}$. Then we have that $G$ is isomorphic to the pullback $(G/N_1)\curlywedge(G/N_2)$ and the following diagram commutes:
$$\xymatrix{  
  &  & 1\ar[d] & 1\ar[d] &  \\
  &  & N_2\ar[d]\ar@{=}[r] & N_2\ar[d] &  \\
 1\ar[r] & N_1\ar[r]\ar@{=}[d] & G\ar[r]\ar[d] & G/N_1\ar[r]\ar[d] & 1 \\
 1\ar[r] & N_1\ar[r] & G/N_2\ar[r]\ar[d] & G/N_1N_2\ar[r]\ar[d] & 1 \\
  &  & 1 & 1 &  \\
}$$
Here the maps from a group to a quotient of that group are the natural maps. 
\end{lemma}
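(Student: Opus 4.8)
The plan is to produce an explicit isomorphism between $G$ and the fiber product $(G/N_1)\curlywedge(G/N_2)$ formed over $G/N_1N_2$, and then to read off commutativity and exactness of the displayed diagram from the naturality of the maps involved. First I would define the homomorphism $\phi\colon G \to (G/N_1)\times(G/N_2)$ by $\phi(g) = (gN_1, gN_2)$, i.e. the map induced by the two natural quotient projections. The target fiber product is $P = \{(xN_1, yN_2) : xN_1N_2 = yN_1N_2\}$, where the defining condition compares the two images in $G/N_1N_2$.

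The first routine step is to check $\im\phi \subseteq P$: for any $g\in G$ the two images of $\phi(g)$ in $G/N_1N_2$, one factoring through $G/N_1$ and one through $G/N_2$, both equal $gN_1N_2$, so they agree. Next I would compute $\Ker\phi = N_1\cap N_2$, so the hypothesis $N_1\cap N_2 = \{e\}$ yields injectivity immediately.

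The key computation, which I expect to be the main point of the argument, is surjectivity of $\phi$ onto $P$. Given $(xN_1, yN_2)\in P$ the condition $xN_1N_2 = yN_1N_2$ means $x^{-1}y \in N_1N_2$; writing $x^{-1}y = n_1 n_2$ with $n_i\in N_i$ and setting $g = x n_1$, one verifies $gN_1 = xN_1$ (since $n_1\in N_1$) and $gN_2 = x n_1 N_2 = x n_1 n_2 N_2 = yN_2$ (since $n_2\in N_2$), so $\phi(g) = (xN_1, yN_2)$. Thus $\phi$ is an isomorphism $G \xrightarrow{\sim} P = (G/N_1)\curlywedge(G/N_2)$. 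The potential pitfall here is purely bookkeeping: one must keep track of which coset each $n_i$ is absorbed into, and the normality of $N_1$ and $N_2$ is what guarantees $N_1N_2$ is a subgroup so that the decomposition $x^{-1}y = n_1n_2$ exists.

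Finally, to justify the diagram I would observe that every arrow is either an inclusion or a natural quotient map, so all squares commute on the nose. The only identifications needing comment are along the bottom row and the right column: the image of $N_1$ in $G/N_2$ is $N_1N_2/N_2$, which by the second isomorphism theorem and $N_1\cap N_2 = \{e\}$ is isomorphic to $N_1$, giving exactness of $1\to N_1\to G/N_2\to G/N_1N_2\to 1$; symmetrically, the image of $N_2$ in $G/N_1$ is $N_1N_2/N_1 \cong N_2$, giving exactness of $1\to N_2\to G/N_1\to G/N_1N_2\to 1$. With these identifications all four rows and columns are exact and the diagram commutes, completing the proof.
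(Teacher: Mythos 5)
Your proof is correct. Note that the paper does not actually prove this lemma --- it is quoted verbatim from Michailov \cite[Lemma 4.1]{32} --- so there is no internal argument to compare against; your direct verification (the map $g\mapsto (gN_1,gN_2)$ has kernel $N_1\cap N_2=\{e\}$ and surjects onto the fiber product via the decomposition $x^{-1}y=n_1n_2$ in $N_1N_2$, with the outer rows and columns handled by the second and third isomorphism theorems) is exactly the standard argument one would supply, and it is complete.
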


Using this lemma, one can prove the following proposition.

\begin{prop}\cite[Theorem 4.1]{32}\label{prop:Central_Product_Embedding}
Let $G$, $N_1$ and $N_2$ be group as in Proposition \ref{lem:grps_wedge}. Let $K/\QQ$ be a Galois extension with $\Gal(K/\QQ)\simeq G/N_1N_2$. Then the embedding problem $(K/\QQ,G,N_1\times N_2)$ is solvable if and only if $(K/\QQ,G/N_1,N_2)$ and $(K/\QQ,G/N_1,N_2)$ are solvable. 
\end{prop}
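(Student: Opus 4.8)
The plan is to translate everything into lifting problems for $\Gamma := \Gal(\overline{\QQ}/\QQ)$ and to use the pullback description of $G$ furnished by Lemma \ref{lem:grps_wedge}. Write $q\colon \Gamma \twoheadrightarrow G/N_1N_2$ for the surjection attached to $K/\QQ$ (recall $\Gal(K/\QQ)\simeq G/N_1N_2$). Since $N_1\cap N_2=\{e\}$ the kernel $N_1\times N_2$ is to be read as the subgroup $N_1N_2\lhd G$, and a proper solution of $(K/\QQ, G, N_1\times N_2)$ is exactly a surjection $\widetilde q\colon\Gamma\twoheadrightarrow G$ with $(G\to G/N_1N_2)\circ\widetilde q = q$; the two sub-problems are the analogous lifting problems with targets $G/N_1$ and $G/N_2$, i.e. the two legs of the diagram in Lemma \ref{lem:grps_wedge}. (In particular the second sub-problem should be read as $(K/\QQ, G/N_2, N_1)$.)

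The forward implication is formal. Given a solution $M/\QQ$, identify $N_1,N_2\lhd G\simeq\Gal(M/\QQ)$; because $K=M^{N_1N_2}$ and $N_i\le N_1N_2$ we have $K\subseteq M^{N_i}$, while $N_i\lhd G$ makes $M^{N_i}/\QQ$ Galois with group $G/N_i$ and $\Gal(M^{N_i}/K)\simeq N_{3-i}$. Thus $M^{N_1}$ solves $(K/\QQ,G/N_1,N_2)$ and $M^{N_2}$ solves $(K/\QQ,G/N_2,N_1)$.

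For the converse I would start from solutions $M_1,M_2$ of the two sub-problems, so $\Gal(M_1/\QQ)\simeq G/N_1$, $\Gal(M_2/\QQ)\simeq G/N_2$, and $K\subseteq M_1\cap M_2$. Restriction identifies $\Gal(M_1M_2/\QQ)$ with the fibre product $(G/N_1)\times_{\Gal(D/\QQ)}(G/N_2)$ for $D:=M_1\cap M_2$. When $D=K$ this fibre product is taken over $G/N_1N_2$ and is canonically the pullback $(G/N_1)\curlywedge(G/N_2)\simeq G$ of Lemma \ref{lem:grps_wedge}; the order count $[M_1M_2:\QQ]=[M_1:\QQ][M_2:\QQ]/[D:\QQ]$ together with $|N_1N_2|=|N_1||N_2|$ confirms $|\Gal(M_1M_2/\QQ)|=|G|$, so $M_1M_2$ solves $(K/\QQ,G,N_1\times N_2)$.

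The whole difficulty is thus concentrated in arranging $M_1\cap M_2=K$, and this is the one step that is not pure group theory: in general $D=M_1\cap M_2$ is a Galois extension of $K$ whose group is a common quotient of $N_1$ and $N_2$, and it may be strictly larger than $K$ (the case $M_1=M_2$ with $N_1\simeq N_2$ is the obstruction, mirrored by a proper subgroup of $G$ that nonetheless surjects onto both $G/N_1$ and $G/N_2$, as in Goursat's lemma). To remove the overlap the plan is to twist one solution: for abelian $N_i$ the solutions of a fixed sub-problem form a torsor under $\mathrm{H}^1(\QQ,N_i)=\operatorname{Hom}(\Gamma,N_i)$ (the non-abelian case uses the corresponding pointed $\mathrm{H}^1$), and since $\QQ$ is Hilbertian there are abundant extensions linearly disjoint from any prescribed finite one. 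Hence I would replace $M_2$ by a twist $M_2'$, still a solution of its sub-problem, whose $\Gal(D/K)$-part is realised along characters ramified only at primes avoided by $M_1$, forcing $M_1\cap M_2'=K$; the order count above then applies to $M_1M_2'$. The hard part, and the only arithmetic input, is exactly this surjectivity/disjointness step: combining the two lifts through the universal property of the pullback yields a homomorphism $\Gamma\to G$ for free, but upgrading it to a surjection is what the cited \cite[Theorem~4.1]{32} must supply.
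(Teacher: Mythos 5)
The paper does not actually prove this proposition; it is imported wholesale from \cite[Theorem 4.1]{32}, so there is no internal proof to compare your argument against. Judged on its own terms, your outline has the right architecture: the forward direction via fixed fields is complete, you correctly repair the typo in the statement (the second sub-problem should read $(K/\QQ,G/N_2,N_1)$), and for the converse you correctly reduce everything to forcing $M_1\cap M_2=K$, equivalently to upgrading the homomorphism $\Gamma\to G$ supplied by the universal property of the pullback in Lemma \ref{lem:grps_wedge} to a surjection. But that is exactly the step you then hand back to \cite{32}, so as written the proposal establishes no more than the paper's citation does; moreover the twisting sketch is not yet an argument, since twisting a surjective lift by a class in $\mathrm{H}^1(\Gamma,N_1)$ need not preserve surjectivity, and the assertion that a twist ramified away from $M_1$ separates the two solutions while remaining a proper solution of its own sub-problem is precisely what would have to be proved.

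Two concrete observations toward closing that gap. In the only place the proposition is used (Proposition \ref{prop:64_G_in_F}) one has $N_1N_2=[G,G]$ for a finite $2$-group $G$, so $N_1N_2\subseteq\Phi(G)$; any lift $\Gamma\to G$ of $q$ then has image $H$ satisfying $H\cdot N_1N_2=G$, hence $H\Phi(G)=G$ and $H=G$ by the Frattini property. So in the intended application surjectivity is automatic, and the weak form of the equivalence (the obstruction class for the central kernel $N_1\times N_2$ decomposing componentwise in $\mathrm{H}^2$) already suffices --- no Hilbertianity or twisting is needed. For arbitrary $N_1,N_2$ with trivial intersection, the ``if'' direction genuinely requires the disjointness step you flag, which is a reason to check the precise hypotheses of \cite[Theorem 4.1]{32} before quoting the proposition at this level of generality.
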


\begin{remark}
Proposition \ref{prop:Central_Product_Embedding} can be stated more generally for an arbitrary base-field, but we only state it with $\QQ$ as the base field so for the sake of simplicity. For more details, the reader is encouraged to see \cite{32}.
\end{remark}

We are now ready to show that $F$ is $G$-complete with respect to some larger groups.

\begin{prop}\label{prop:64_G_in_F}
Let $G$ be isomorphic to {\rm {SmallGroup(64,206)}, {SmallGroup(64,215)},} or {\rm SmallGroup(64,216)}. Then $\QQ_{D_4}$ is $G$-complete. 
\end{prop}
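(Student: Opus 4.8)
The plan is to follow the same template used in Proposition \ref{prop:Simple_G_in_F} and Lemmas \ref{lem:Q8_condly}--\ref{lem:32-49_condly}: for each of the three groups $G$ isomorphic to {\rm SmallGroup(64,206)}, {\rm SmallGroup(64,215)}, or {\rm SmallGroup(64,216)}, take an arbitrary $K/\QQ$ with $\Gal(K/\QQ) \simeq G$ and produce normal subgroups $N_1,\dots,N_r \lhd G$ with trivial intersection such that each quotient $G/N_i$ is a group for which $\QQ_{D_4}$ is already known to be $G/N_i$-complete; then invoke Lemma \ref{lem:contained}. The groups for which completeness is already established unconditionally are $\ZZ/2\ZZ$, $\ZZ/4\ZZ$, and $D_4$ from Proposition \ref{prop:Simple_G_in_F}. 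The first task is therefore a purely group-theoretic search: for each of the three order-$64$ groups, enumerate the normal subgroups, identify those whose quotients are among $\{\ZZ/2\ZZ, \ZZ/4\ZZ, D_4\}$, and check whether some collection of these has trivial intersection. This is exactly the kind of finite computation carried out in the earlier lemmas, and the code at \cite{MagmaCodeForErrata} can be used to verify it.

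If, however, the quotients by normal subgroups with trivial intersection are not all among $\ZZ/2\ZZ$, $\ZZ/4\ZZ$, $D_4$---for instance if the natural choice of $N_i$ forces a quotient isomorphic to one of the groups in Corollary \ref{cor:embedding} such as $Q_8$, {\rm SmallGroup(16,13)}, or {\rm SmallGroup(32,49)}---then the argument must instead route through the conditional Lemmas \ref{lem:Q8_condly}, \ref{lem:16-3_condly}, and \ref{lem:32-49_condly}. In that case the extra step is to verify that the relevant Brauer-group conditions, namely the appropriate products of quaternion symbols $(a,b)$, $(ab,-1)$, $(a,ab)(c,-1)$, or $(a,b)(c,d)$, are automatically trivial in $\Br(\QQ)$ whenever $K$ embeds into an honest order-$64$ extension with Galois group $G$. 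The key observation making this work is that an unconditional $G$-completeness statement is being asserted (no hypotheses on $K$ beyond $\Gal(K/\QQ)\simeq G$), so any quadratic or polyquadratic subfield of $K$ that we wish to embed into a $D_4$ or $\ZZ/4\ZZ$ extension must \emph{already} satisfy the corresponding embedding criterion simply because it sits inside the solvable tower realizing $G$; the obstruction classes vanish by Theorem \ref{thm:embedding} applied to the extension $K/\QQ$ itself.

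The main obstacle I expect is the second possibility: showing that the required Brauer conditions are forced by the mere existence of $K$ with group $G$. Concretely, one must argue that a subquotient of $G$ of a given shape guarantees the solvability of the smaller embedding problem over $\QQ$, so that the quaternion symbols are trivial. This is where Proposition \ref{prop:Central_Product_Embedding} should help: it reduces the solvability of an embedding problem with decomposable kernel $N_1 \times N_2$ to the solvability of two smaller problems, each of which corresponds to embedding a quadratic or biquadratic subfield into a cyclic or dihedral extension. Combining this with Lemma \ref{lem:grps_wedge}, which exhibits $G$ as a pullback $(G/N_1)\curlywedge(G/N_2)$, lets one decompose each order-$64$ group into pieces that are either handled directly by Proposition \ref{prop:Simple_G_in_F} or by the conditional lemmas whose Brauer hypotheses are now automatically met. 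Once the correct normal subgroups and their quotients are identified for each of the three groups, the conclusion $K \subseteq \QQ_{D_4}$ follows from Lemma \ref{lem:contained}, and hence $\QQ_{D_4}$ is $G$-complete in each case.
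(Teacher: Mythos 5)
Your proposal's second branch is exactly the paper's argument: the paper writes $[G,G]\simeq(\ZZ/2\ZZ)^2$ as $N_1\times N_2$, invokes Lemma \ref{lem:grps_wedge} and Proposition \ref{prop:Central_Product_Embedding} to conclude that the mere existence of $K$ forces both individual Brauer obstructions to vanish, and then applies Lemmas \ref{lem:Q8_condly}--\ref{lem:32-49_condly} to the two degree-32 subfields (with quotients $D_4\times(\ZZ/2\ZZ)^2$ and ${\rm SmallGroup}(32,48)$ or ${\rm SmallGroup}(32,49)$) whose compositum is $K$. So the proposal is correct and takes essentially the same approach as the paper; your unconditional first branch (quotients only among $\ZZ/2\ZZ$, $\ZZ/4\ZZ$, $D_4$) is not available for these groups, which is precisely why the argument must route through the conditional lemmas as you anticipate.
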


\begin{proof}
Let $K/\QQ$ be a Galois extension with Galois group $G$, one of the three groups listed above. Then, in all 3 cases $[G,G] \simeq (\ZZ/2\ZZ)^2$ and $G/[G,G]\simeq (\ZZ/2\ZZ)^4.$ Therefore, we can write $[G,G] = N_1\times N_2$ where $G$, $N_1$, and $N_2$ satisfy the conditions of Lemma \ref{lem:grps_wedge}. Choosing $N_1$ and $N_2$ correctly we get that in all cases $G/N_1\simeq D_4\times (\ZZ/2\ZZ)^2$ and 
$$G/N_2\simeq 
\begin{cases}
{\rm SmallGroup(32,48)}  & G\simeq {\rm SmallGroup(64, 206)}\\ 
{\rm SmallGroup(32,49)}  & G\simeq {\rm SmallGroup(64, 215)\ or\ SmallGroup(64, 216).} \\
\end{cases}$$
The group SmallGroup(32,48) is isomorphic to ${\rm SmallGroup(16,13)}\times \ZZ/2\ZZ$. Using Proposition \ref{prop:Central_Product_Embedding}, we get that the field $\QQ(\sqrt{a},\sqrt{b},\sqrt{c},\sqrt{d})$ can be embedded into a field with Galois group $G$ if and only if 
$$\begin{cases}
(a,b) = (a,b)(c,-1) = 1 \in\Br(\QQ) & G\simeq {\rm SmallGroup(64, 206)}\\ 
(a,b) = (a,b)(c,d) = 1 \in\Br(\QQ) & G\simeq {\rm SmallGroup(64, 215)\ or\ SmallGroup(64, 216).} \\
\end{cases}$$
Rewriting these conditions we get
$$\begin{cases}
(a,b) = (c,-1) = 1 \in\Br(\QQ) & G\simeq {\rm SmallGroup(64, 206)}\\ 
(a,b) = (c,d) = 1 \in\Br(\QQ) & G\simeq {\rm SmallGroup(64, 215)\ or\ SmallGroup(64, 216).} \\
\end{cases}$$
Thus, any Galois extension $K/\QQ$ with $\Gal(K/\QQ)\simeq G$ is the compositum of two fields of degree 32 that are in $\QQ_{D_4}$ by Lemmas \ref{lem:Q8_condly}, \ref{lem:16-3_condly}, and \ref{lem:32-49_condly}. Therefore it must be that $K\subseteq \QQ_{D_4}$ and $\QQ_{D_4}$ is $G$-complete for all 3 groups. 

\end{proof}

With this we are now ready to prove Theorem \ref{thm:goal} but before we get started we make a remark that will guide our strategy. 

\begin{remark}\label{rmk:lift}
Examining the proofs of Propositions \ref{prop:Simple_G_in_F}, and \ref{prop:64_G_in_F} as well as Lemmas \ref{lem:Q8_condly}, \ref{lem:16-3_condly}, and \ref{lem:32-49_condly}, we see that at each step it was useful to 
think of the field in question as living in a larger but still finite extension of $\QQ$. 
For example in the proof of Proposition \ref{prop:Simple_G_in_F} while proving that $\QQ_{D_4}$ is $\ZZ/4\ZZ$-complete, we start with a generic $\ZZ/4\ZZ$ extension of $\QQ$, called $K/\QQ$ in the proof. The result follows from considering $K$ inside of the field $LK$, where $L/\QQ$ is a $D_4$ extension and $[L\cap K : \QQ] = 2.$ 

Motivated by this, when we are trying to compute the $p$-primary component of $E(\QQ_{D_4})$ and show that it is 
$\ZZ/p^n\ZZ\oplus\ZZ/p^m\ZZ$ with $m\leq n$ using this method, it is useful to not just consider the field of definition of the $p^n$-th torsion but actually the field of definition of the $p^{n+1}$ torsion. 
On the groups side of things, this just means that we will lift groups of level $p^n$ to level $p^{n+1}$ before doing any of the computations. 
The idea is that viewing $\QQ(E[p^n])$ inside $\QQ(E[p^{n+1}])$, allows us to see that the relevant fields are generated by fields that are in $\QQ_{D_4}$.

\end{remark}

\begin{proof}[Proof of Theorem \ref{thm:goal}]
Let $E/\QQ$ be an elliptic curve and $p$ a prime. We denote the $p$-primary component of $E(\QQ(D_4^\infty))$ by $E(\QQ(D_4^\infty))[p^\infty]$ and its field of definition by $K_p = \QQ(E(\QQ(D_4^\infty)[p^\infty])).$
 
The overall strategy of the proof is to show that $K_p\subseteq \QQ_{D_4}$. 
This combined with the fact that $\QQ_{D_4}\subseteq\QQ(D_4^\infty)$ would yield that $E(\QQ(D_4^\infty))[p^\infty] = E(\QQ_{D_4})[p^\infty]$ and from here the result follows from the classification of finite abelian groups. We proceed by breaking the argument down into cases based on the size and parity of $p$.

Suppose $p\geq 5$ is a prime. From \cite[Sections 5.1-3]{D4}, we know that in this case $K_p/\QQ$ is an abelian extension. Since $\Gal(K_p/\QQ)$ has exponent dividing 4 and is abelian, it must be isomorphic to $(\ZZ/4\ZZ)^{s_1} \times (\ZZ/2\ZZ)^{s_2}$ for some nonnegative integers $s_1$ and $s_2$, and since $\QQ_{D_4}$ is $\ZZ/2\ZZ$- and $\ZZ/4\ZZ$-complete $K_p \subseteq \QQ_{D_4}$ and $E(\QQ(D_4^\infty))[p^\infty] = E(\QQ_{D_4})[p^\infty]$. 

Next, suppose $p=3$. From \cite[Section 5.4]{D4}, we see that there are 3 different cases that need to be checked corresponding to the possible isomorphism class of $\E(\QQ(D_4^\infty))[3^\infty]$. For two of them $\Gal(K_p/\QQ)\simeq \ZZ/2\ZZ$ and in this case clearly $K_3\subseteq\QQ_{D_4}$. 
The last cases corresponds to the case when $\Gal(K_3/\QQ)$ is isomorphic to a subgroup of the normalizer of the split Cartan subgroup of $\GL_2(\ZZ/3\ZZ)$. 
It turns out that the normalizer of the non-split Cartan subgroup of $\GL_2(\ZZ/3\ZZ)$ is isomorphic to $D_4$ and again we have $K_3\subseteq \QQ_{D_4}$.

The last outstanding case is when $p=2$. To settle this case, let $\mathcal{S}$ be the set containing of isomorphism classes of groups with small groups label  (2,1),(4,1),(8,3),(64,206), (64,215), or (64,216). 
When a we say a group is in $\mathcal{S}$, we mean that it is isomorphic to a group in $\mathcal{S}$. 

Next for each of the relevant groups from \cite{RZB} and as discussed in Remark \ref{rmk:lift}, consider them at twice the level necessary to verify \cite[Table 2]{D4}.  
For each of these groups $G$, we let 
$$S_G = \{N : N\triangleleft G \hbox{ and $G/N\in\mathcal{S}$}\} \hbox{ and }N_G = \displaystyle \bigcap_{N\in S_G} N.$$ 

Now, if $E/\QQ$ is comes from a point on the modular curve corresponding to one of the groups $G$ in \cite[Table 2]{D4}, then there is a subfield of $\QQ(E[2^n])$ call it $L_G$ corresponding to the fixed field of $N_G$ and with Galois groups $G/N_G$. 
The field $L_G$ is contained in $\QQ_{D_4}$ since $\QQ_{D_4}$ is $G$ complete for all $G\in \mathcal{S}$ by Propositions \ref{prop:Simple_G_in_F} and \ref{prop:64_G_in_F}. 
Further we know that $E(L_G)[2^\infty]$ is isomorphic to the subgroup of $(\ZZ/2^n\ZZ)^2$ that is fixed by $N_G$ where $2^n$ is the level of $G$.
So if we can confirm that $E(L_G)[2^\infty]\simeq E(\QQ(D_4^\infty))[2^\infty]$, then we would have $E(\QQ_{D_4})[2^\infty] \simeq E(\QQ(D_4^\infty))[2^\infty]$ since $E(L_G)\subseteq E(\QQ_{D_4})\subseteq E(\QQ(D_4^\infty))$. 
All that is left to do is show that for each $G$ of interest the fixed group of $N_G$ is isomorphic to the subgroup given in \cite[Table 2]{D4}. 
Thus $E(D_4^\infty)[2^\infty] = E(K_G)[2^\infty] \subseteq E(\QQ_{D_4})[2^\infty] \subseteq E(D_4^\infty)[2^\infty]$ implying that $E(\QQ_{D_4})[2^\infty] = E(\QQ_{D_4}^\infty)[2^\infty]$. 
Thus completing the case when $p=2$ as well as the proof of Theorem \ref{thm:goal}. The code confirming this can be found in \cite{MagmaCodeForErrata}.\end{proof}

\section{Remaining Open Questions}\label{sec:open}

The question about the equality of the fields $\QQ(D_4^\infty)$ and $\QQ_{D_4}$ still remains open, but there are some smaller questions one could ask that could potentially shed light on the relationship between these two fields. 

\begin{question}
Is $\QQ_{D_4}$ a $Q_8$-complete field?
\end{question}

We saw that under the added assumption that each individual term coming from the embedding criterion is trivial we can show that a $Q_8$ extension of $\QQ$ is in $\QQ_{D_4}$, but the question remains in the case when $(a,b) = (ab,-1) \neq 1 \in \Br(\QQ)$. 
More concretely, consider the field $\QQ(\alpha)/\QQ$ where $\alpha$ is a root of 
$$f(x) = x^8 + 84x^6 + 1260x^4 + 5292x^2 + 441.$$
In this case, we have that $\Gal(K/\QQ)\simeq Q_8$ and $\QQ(\sqrt{3},\sqrt{14}) \subset K$ and $(3,14) = (42,-1) \neq 1 \in \Br(\QQ)$. 
Is $K\subseteq\QQ_{D_4}$? We suspect that if one could find the answer to this particular questions, they could also likely settle the relationship between $\QQ_{D_4}$ and $\QQ(D_4^\infty)$.

\end{document}